\theoremstyle{theorem}
\newtheorem{theorem}{Theorem}[section]
\theoremstyle{definition}
\newtheorem{definition}[theorem]{Definition}
\newtheorem{example}[theorem]{Example}
\theoremstyle{remark}
\newtheorem{remark}[theorem]{Remark}
\title{
    Algorithms and data structures
    for numerical computations
    with automatic precision estimation
}
\author{
    Igor V. Netay
    \thanks{Joint Stock "Research and production company ``Kryptonite"}
    \thanks{Institute for Information Transmission Problems, Russian Academy of Sciences}
    \href{mailto:i.netay@kryptonite.ru}{i.netay@kryptonite.ru}
}
\date{}
\begin{document}

\maketitle

\begin{abstract}
    We introduce data structures and algorithms to count numerical inaccuracies
    arising from usage of floating numbers described in IEEE~754.
    Here we describe how to estimate precision for some collection of functions
    most commonly used for array manipulations and training of neural networks.
    For highly optimized functions like matrix multiplication,
    we provide a fast estimation of precision and some hint how the estimation
    can be strengthened.
\end{abstract}

\section*{Introduction}

Numerical calculations with floating point numbers almost always are inexact.
The simplest known example is
\begin{verbatim}
    0.1 + 0.2 != 0.3,
\end{verbatim}
which holds for almost all programming languages.
This example illustrates the consequences of non-exact conversion
from decimal number literals to binary float number representation.
At the same time, computations are themselves inexact.

Another well-known example shows that the values of roots
of quadratic equation depend on way the solution was computed.
Consider equation:
\begin{equation}
    \label{eq:2}
    x^2 + 1000 x - 2\cdot 10^{-11} = 0.
\end{equation}
If we find the roots as~$\frac{-a \pm \sqrt{D}}{2a}$ for~$D=b^2-4ac$
for~$ax^2+bx+c=0$, then for~\eqref{eq:2} we get one root~$x= - 1000$
with~$53$ exact bits (maximal number for $64$-bit float) and other
root~$\approx 5.7\cdot 10^{-14}$ without any exact bits.
If we will compute the roots in other way:
\begin{align}
    x_1 &= \frac{-b - \operatorname{sgn}(b)\sqrt{D}}{2a}, \\
    x_2 &= \frac{c}{a\cdot x_1}.
\end{align}
Then we get~$x_1=-1000$ and~$x_2=2\cdot 10^{-14}$ with all $53$~bits
exact for both roots.

The example above shows that accuracy of the result depends on way
of computation frequently.
Actually, usually one cannot easily guess a proper way to get maximally exact results.
Moreover, one cannot get exact results for any arithmetic operation.
Subtraction of numbers that are close to each other is the worst possible case.
During this operation accuracy of the result drops drastically and there is no way to avoid it.

Two mathematically equivalent ways to compute the same expression lead to
different numerical results, it may be unclear which one is closer to correct
mathematical value.
This fact was the motivation for us to implement means to estimate loss of precision
in computations and to track exact mantissa bits in the results.
Also, this helps one to avoid wrong conclusions based upon possible
interpretations of numerical noise as meaningful data.

Calculations with huge arrays of numbers and arithmetic operations
involved like training of neural networks usually lead to high
accumulated digital noise in the result (see~\cite{fluctuations}).
Actually, even simpler numerical computations can lead
to inexact (and unreliable) results frequently.
For instance, among such computations are inversion of ill-conditioned matrices and
solutions of ill-conditioned systems of linear equations.

We do not provide new ways to invert matrix or to solve
linear systems or to perform any other computations.
Instead, we enhanced standard library algorithms with additional
digital error computation and its tracking that will automagically
indicate whether the result is reliable or it should be discarded
and more precise algorithms (or different computational approaches)
are needed.

Usual goal of numerical analysis problem is to design algorithms to construct
maximally or enough precise approximations of mathematical objects.
Now for many applied problems numerical precision is ignored, and computations
are constructed from some standard functions provided by well-known libraries.
Here we propose the answer for the problem of inaccuracy estimation for a given
model and algorithm.
For practical potentially unreliable algorithm calculation of precision
should be performed and may lead to searching for some more reliable algorithms.

\section{Numerical preliminaries}
\label{sec:num-pre}

\subsection{Errors}
\label{subsec:err}

Suppose we calculate an approximation~$\hat{x}$ of real value~$x$.

\begin{definition}
    \textit{Absolute error} is defined as $|\Delta x| = |\hat{x} - x|$.
\end{definition}

\begin{definition}
    \textit{Relative error} is defined as $\delta$
    such that~$\dfrac{\hat{x}}{x}= 1 + \delta$.
\end{definition}

We will denote by \texttt{float} numeric type the standard $64$-bit
floating point numbers (like it is called in Python language and
not like C where it denotes $32$-bit float).
We implement our library in Rust and therefore use its notation
where floating types are called \texttt{f32} and \texttt{f64}.

We extend standard \texttt{float} (equal to Rust~\texttt{f64}) type
and call it~\texttt{xf64} (extended float with $64$ bits).

\begin{example}
    Consider equation~\eqref{eq:2}.
    One can see in~\autoref{tab:2_1},~\autoref{tab:2_2} that computation with~\texttt{xf64}
    maintains the same number of mantissa bits ($53$) and additionally shows 
    precision of computation.
    Method \texttt{\_\_str\_\_} for \texttt{xf64} was modified to substitute 
    unreliable digits of the number with question marks.

    \begin{table}[h]
        \centering
        \begin{tabular}{|l|l|l|}\hline
                type &
                $x_1$, $15$ digits &
                $x_1$, exp.~form
            \\ \hline\hline
                \texttt{float} (64-bit) &
                \texttt{-1000.000000000000} &
                \texttt{-1.000000000000000e+03}
            \\ \hline
                \texttt{xf64} &
                \texttt{-1000.00000000000}\textbf{?} &
                \texttt{-1.00000000000000?e+03}
            \\ \hline\hline
                type &
                $x_2$, $15$ digits &
                $x_2$, exp.~form
            \\ \hline\hline
                \texttt{float} (64-bit) &
                \texttt{0.0000000000000}\textbf{5}\texttt{7} &
                \textbf{5}\texttt{.684341886080801e-14}
            \\ \hline
                \texttt{xf64} &
                \texttt{0.0000000000000}\textbf{?}\texttt{?} &
                \textbf{?}\texttt{.???????????????e-14}
            \\ \hline
        \end{tabular}
        \caption{Solution in the first way}
        \label{tab:2_1}
        \begin{tabular}{|l|l|l|}\hline
                type & 
                $x_2$, $15$ digits &
                $x_2$, exp.~form
            \\ \hline\hline
                \texttt{float} (64-bit) & 
                \texttt{0.000000000000020} &
                \texttt{2.000000000000000e-14}
            \\ \hline
                \texttt{xf64} &
                \texttt{0.000000000000020} &
                \texttt{2.00000000000000}\textbf{?}\texttt{e-14}
            \\ \hline
        \end{tabular}
        \caption{Solution in the second way (same for $x_1$)}
        \label{tab:2_2}
    \end{table}
\end{example}

\subsection{Error propagation in function computation}
\label{subsec:err-fn}

Given an approximation~$\hat{x}$ of real value~$x$,
we approximate~$y=f(x)$ with some~$\hat{y}$.
For estimation of the absolute error in~$\hat{y}$,
linear error estimation is usually applied:
\[
    |\Delta y| =
    |\hat{y} - y| =
    \left|f(x + \Delta x) - f(x)\right| \approx
    \left|\frac{df(x)}{dx}\right|_{x=\hat{x}} \cdot |\Delta x|.
\]
For the relative error we obtain
\[
    \left| \frac{\hat{y} - y}{\hat{y}} \right| \approx
    \left| \frac{\hat{x}}{f(\hat{x})}f'(\hat{x}) \right|
\]
The number $\left| \dfrac{\hat{x}}{f(\hat{x})}f'(\hat{x}) \right|$ is called
the \textit{condition number} $\mathcal{C}_f(\hat{x})$ of function~$f(x)$ at~$\hat{x}$.
It corresponds to the error propagation coefficient of function~$f(x)$ at~$\hat{x}$.
This means that relative error~$\delta$ in~$\hat{x}$ leads to error
of magnitude~$\mathcal{C}_f(\hat{x})\cdot \delta$ in~$f(\hat{x})$.

It is easy to check that if~$\hat{x}$ has~$s$ significant bits,
then~$f(\hat{x})$ has~$s - \log_2\mathcal{C}_f(\hat{x})$ exact bits.
Examples of some condition numbers for some standard floating point
number operations are listed in~\autoref{tab:cn}.

\begin{table}
    \centering
    \begin{tabular}{|c|c|} \hline
        $f(x)$ & $\mathcal{C}_f(x)$ \\ \hline\hline
        $x + a$ & $\dfrac{x}{x + a}$ \\ \hline
        $ax$ & $1$ \\ \hline
        $1/x$ & $1$ \\ \hline
        $x^n$ & $|n|$ \\ \hline
        $\ln(x)$ & $\left|\frac{1}{\ln x}\right|$ \\ \hline
        $\sin x$ & $|x\cot x|$ \\ \hline
        $\cos x$ & $|x\tan x|$ \\ \hline
    \end{tabular}
    \caption[Condition numbers for standard functions]{Condition numbers for some standard operations}
    \label{tab:cn}
\end{table}

\begin{example}
    Function~$\arcsin$ has a poles at~$\pm1$.
    So, near to poles we can see high precision loss:
\begin{verbatim}
    arcsin(0.999999) = 1.56?
    arcsin(0.999???) = 1.???
\end{verbatim}
    Here we calculate the function at~$0.999999$ with~$6$ and~$3$ exact digits.
    High condition number leads to high precision loss.
\end{example}

\subsection{Other reasons of results divergence}

Another important reason of variations in results of numerical computations is
lack of associativity for basic operations like addition and multiplication
for floating point numbers used in programmatic computations
(unlike computations with real numbers~$\mathbb{R}$).

Therefore, any action influencing the order of arithmetic operations
can change the result of the entire computations.
For instance,
\begin{itemize}
    \item vectorization instructions in CPU,
    \item flags used during compilations of imported libraries~%
        (e.\,g.~\texttt{-O2},\texttt{-O3}),
    \item any race condition of any form:
        \begin{itemize}
            \item any use of \texttt{async/await},
            \item multithreading implementation details in your operation system,
            \item networking data transfer implementation details in you cluster.
        \end{itemize}
\end{itemize}

Also, different versions of core systems libraries like \texttt{glibc} can
lead to some small differences and finally lead to some more visible difference.

\section{Related work}

Mainly, other ways to check precision are the following:
\begin{enumerate}
    \item try to redo computation with more bits in floats and find the common
        part of the result;
    \item start with some big enough number of digits/bits and see the loss of
        significance in systems like \texttt{pari/gp} or other libraries
        using \texttt{gmp},
    \item use interval arithmetics (like \texttt{C++ boost/intervals}).
\end{enumerate}

Unfortunately, these ways have the following major issues:
\begin{itemize}
    \item there is no any actual precision guarantee (1),
    \item very low (2) or low (1, 3) performance,
    \item very high (2) or high (1, 3) memory overhead,
    \item the computation should be redone from the beginning and lead to some
        new result of some number data type.
        So, there is not way to check precision of an existing computation.
    \item need to reproduce computation in other programming languages
        (3 for most of cases).
\end{itemize}

We prefer to estimate precision of some existing computation algorithm and track
the estimation of its errors alongside the main dataflow.
Estimated inaccuracies are stored in memory as numbers of exact mantissa bits.

Given initial data with some values of inaccuracies, we can not compute
all the results and their inaccuracies precisely, so we should choose between:
\begin{enumerate}
    \item estimate inaccuracies from below (so parts of results marked as 
        inexact have no sense and should never be interpreted),
    \item estimate inaccuracies from above (so data marked as inexact may have
        some computational errors, while all the other data is exact),
    \item something in the middle without any guaranties.
\end{enumerate}

The most reliable and expensive way is to combine (1) and (2).
In this way mantissa bits are divided into three groups:
\begin{itemize}
    \item ``black'' bits which are noisy and meaningless,
    \item ``white'' bits which are exact and reliable,
    \item ``grey'' bits in the middle part may depend on particular software
        and hardware details, computational process environment and other factors.
\end{itemize}
``Black'' bits arise mainly from data inexactness and mathematical error propagation.
At the same time, ``grey'' bits arise from numerically unstable algorithms.
Actually, some computations evaluate results depending on initial data
discontinuously and cannot be implemented with some numerically stable algorithm.

For instance, computation of eigenvectors for ill conditioned matrix would have
big ``grey'' mantissa parts for any algorithm.

\section{Isolation level for precision estimation}

We can estimate precision of some computation in  different ways.
Given a complex function, we can estimate its common error propagation coefficient
and can also do the same for its parts step by step.

Although bigger fragmentation may provide a better estimation,
some standard functions may have specific software and hardware implementation
in some external libraries.
This imposes a restriction on subdivision level for precision estimation.

For instance, computation of trigonometric functions (usually dependent 
on~\texttt{glibc}) or sum of array (that may be vectorized and dependent on
availability of AVX/SSE instructions in CPU) should be estimated as atomic
numerical operations.
Another difficult atomic operation which is usually dependent on external libraries
is matrix multiplication~(see~\S\ref{sec:mm}).

Deatomization of complex operations like matrix multiplication require
reimplementation of corresponding dependencies like \texttt{blas/cublas/mkl}.
Such very powerful tools can not be easily modified and transferred to other platforms and
hardware settings, so they stay considered as atomic for current work and near future releases
(see~\ref{sec:impl}).

\section{Data structures and basic precision estimations}

Let us introduce numerical data types \texttt{xf64} and others
(like~\texttt{xf32},~\texttt{xf16},~\texttt{xbf16}) consisting of a
\begin{itemize}
    \item floating point number of type \texttt{f64} (or \texttt{f32,f16,bf16})
        representing real value,
    \item number of type~\texttt{u8} (aka~\texttt{byte} or \texttt{unsigned char})
        representing the number of exact bits of the real value.
\end{itemize}
We will call them \textit{extended floating point numbers}.

We explain below how to make arithmetic operations and calculate some
mathematical functions with these numbers.
In all cases we assume that the result of computation with real values
is known and defined in IEEE~754~(see~\cite{IEEE754}), and we provide a way to
estimate precision of this result.

Some special values are zero and \texttt{NaN/Inf}'s.
From the construction, zero should always be maximally exact and \texttt{NaN/Inf}'s
cannot have any exact bits.

\section{Precision estimation for particular functions}

\subsection{Addition, subtraction and sum}

Suppose we add two binary numbers with some given white, grey and black bits.
Then black bits of the result arise from addition of black bits of any of
summands and from alignment to the left of the result.
Estimation for grey bits can be obtained as sum of arguments inaccuracies.

Note that estimation of precision for sum of many numbers may differ from estimation
if they were added one by one due to possible upper rounding for grey bits.

\subsection{Multiplication, division and product}

Estimation of precision for multiplication can be easily deduced to estimation
of precision for addition.
Let us illustrate it by some examples.

For example, take~$6$ with one white, grey and one black bits.
Let us count exact bits of its square~$36$.

\begin{center}
    \begin{tabular}{cr}
        $\times$
        & $\ytableaushort[\mathtt]{1{*(black!50)1}{*(black){\color{white}0}},1{*(black!50)1}{*(black){\color{white}0}}}$ \\ \hline
        $+$
        & $\ytableaushort[\mathtt]{\none{*(black!50)1}{*(black!50)1}{*(black){\color{white}0}}\none,1{*(black!50)1}{*(black){\color{white}0}}\none\none}$ \\ \hline
        & $\ytableaushort[\mathtt]{1{*(black!50)0}{*(black!50)0}{*(black){\color{white}1}}{*(black){\color{white}0}}{*(black){\color{white}0}}}$ \\
    \end{tabular}
    \hspace*{2cm}
    \begin{tabular}{cr}
        $\times$ &
        $
        \begin{ytableau}[\mathtt]
            \none & 1 & 0 & 1 \\
            1 & 1 & 0 & *(black){\color{white}1} \\
        \end{ytableau}
        $
        \\ \hline
        $+$ &
        $
        \ytableausetup{centertableaux}
        \begin{ytableau}[\mathtt]
            \none & \none & \none & *(black){\color{white}1} & *(black){\color{white}0} & *(black){\color{white}1} \\
            \none & 1 & 0 & 1 \\
            1 & 0 & 1 \\
        \end{ytableau}
        $
        \\ \hline
            &
        $
        \begin{ytableau}[\mathtt]
            1 & 0 & 0 & *(black!50){0} & *(black){\color{white}0} & *(black){\color{white}0} & *(black){\color{white}1}\\
        \end{ytableau}
        $
    \end{tabular}
\end{center}

We see that grey bits can propagate its color to higher bits, but black bits cannot. 
Black bits can induce higher bits to become grey instead.
If we multiply two bits, the color of result is the darkest of arguments colors.

\subsection{Rounding}

Any rounding either keeps the argument the same (in this case precision also
is preserved), or change mantissa bits from some place to the right with some
combination from some discrete predefined set.
If the argument is changed, we should compare the first inexact bit position
and the first changed bit position.
If the first inexact bit position is lower than (to the right from) the first
changed bit position, then the result of rounding have maximal possible number
of exact bits.
Otherwise, the number of exact bits is the same as in the argument.

If the rounding gives zero, it is supposed to be exact.

\subsection{Maxima and minima}

It is well known that floating point numbers do not satisfy axioms of 
linearly ordered set.
Namely, checks of equality fails on the reflexivity rule on special values \texttt{NaN/inf}.

Extended floating point numbers unexpectedly breaks another common expectation
that maximum and minimum of two numbers are always equal to one of the numbers.
Namely, the numeric value is always equal to one of numeric values of arguments,
but the precision may come from the other argument.

For example, for binary numbers~$1.0_2$ and~$1.1_2$ with $2$ and~$1$ exact bits
the minimum is~$1.0$ with~$1$ exact bit.
Indeed, it is not reasonable to set the numeric value of minimum other than
minimal of numerical values of arguments.
At the same time, minimum of~$1.??_2$ and~$1.0?_2$ can not have more than~$1$ exact bit.

\subsection{Differentiable functions}

Estimation of precision for differentiable functions is based on computation of
the corresponding condition number.
The standard floating point functions have explicit derivatives expressed in
elementary functions.
Given the number of exact bits in argument, we can compute
relative error for value returned from the function.
Thus, we can directly obtain estimation of result exact bits.

In this case we consider any such function as an atomic numeric operation which
does not depend on particular implementation.

\subsection{Matrix multiplication and tropical semiring}
\label{sec:mm}

Estimation of matrix multiplication precision is a hard problem, because of
performance issues.
Actually, direct matrix multiplication by the definition is very rarely applied
due to its computation complexity (naive implementation needs~$O(n^3)$ operations for
matrices~$n\times n$).

Let us recall some notation.

\textit{Tropical semi-ring} is a set of numbers and $\infty$ ($-\infty$) 
with two binary operations $\min$ ($\max$) and $+$ with axioms like for ones
for usual numeric ring except group law for $\min$ ($\max$): there is no
inverse operation, so it is called semi-ring instead of ring.

Usually, it is considered with $\infty$ and $\min$, but we need to complement 
this structure with $-\infty$ and $\max$ due to error estimation context.

\textit{Tropical matrix multiplication $\odot$} (also know as
\textit{min-plus matrix multiplication}) is a matrix product over tropical semi-ring.
So, matrix elements can be obtained by usual matrix multiplication with replacement
of $+$ and $\cdot$ with $\max$ and $+$.

Effective tropical matrix multiplication is a difficult problem with many
approaches to particular optimizable cases~(see~\cite{chi2022faster,%
gu2021faster,cslovjecsek2020efficient}).

Unfortunately, our case does not fall into any of these cases.
At the same time, we can estimate tropical matrix product from below
instead of performing its precise numeric computation.

\begin{theorem}
    \label{thm:ex_v1}
    Let $A$,~$B$, and~$C$ are matrices such that~$A \cdot B = C$ of shapes
    ($m \times n$, $n \times k$ and $m \times k$).
    Suppose that~$A$ (resp.~$B$,~$C$) has inaccuracies~$\mathcal{A}$~%
    (resp.~$\mathcal{B}$ and~$\mathcal{C}$).
    Then
    \[
        \mathcal{C} \geqslant \mathcal{A} \odot \mathcal{B}.
    \]
    In particular, the following inequality holds:
    \[
        \mathcal{A} \odot \mathcal{B} \geqslant 
        \frac{1}{n} \cdot \log_2\left(2^\mathcal{A} \cdot 2^\mathcal{B}\right),
    \]
    where power of~$2$ and~$\log_2$ are applied element-wise.
\end{theorem}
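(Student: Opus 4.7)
The plan is to reduce the matrix inequality to an entrywise statement about the scalar expression $C_{ij}=\sum_{k=1}^{n} A_{ik}B_{kj}$, and then compose the per-operation precision rules established in the two preceding subsections on addition and on multiplication.

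For the first inequality I would fix $i,j$ and trace the atomic operations that build $C_{ij}$. Each elementary product $A_{ik}B_{kj}$ has an inaccuracy that, by the multiplication rule of the previous subsection, is tropically controlled by its factors; in the $(\max,+)$ tropical semiring this is $\mathcal{A}_{ik}\otimes\mathcal{B}_{kj}=\mathcal{A}_{ik}+\mathcal{B}_{kj}$. The sum of these $n$ products is then accumulated, and by the addition/sum rule the inaccuracy of a sum is controlled by the tropical sum of the inaccuracies of its summands, i.e.\ by $\oplus=\max$. Composing these two rules inductively along the dot product yields, entrywise,
\[
    \mathcal{C}_{ij} \;\geq\; \max_{1\leq k\leq n}\bigl(\mathcal{A}_{ik}+\mathcal{B}_{kj}\bigr) \;=\; (\mathcal{A}\odot\mathcal{B})_{ij},
\]
which is exactly $\mathcal{C}\geq\mathcal{A}\odot\mathcal{B}$.

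For the \emph{in particular} inequality I would unfold both sides entrywise. Setting $s_k:=\mathcal{A}_{ik}+\mathcal{B}_{kj}$ the left-hand side is $\max_k s_k$, while
\[
    \frac{1}{n}\log_2\bigl(2^{\mathcal{A}}\cdot 2^{\mathcal{B}}\bigr)_{ij} \;=\; \frac{1}{n}\log_2\sum_{k=1}^{n} 2^{\mathcal{A}_{ik}+\mathcal{B}_{kj}} \;=\; \frac{1}{n}\log_2\sum_{k=1}^{n} 2^{s_k}.
\]
The desired bound then follows from the standard log-sum-exp sandwich
\[
    \max_k s_k \;\leq\; \log_2\sum_{k=1}^{n} 2^{s_k} \;\leq\; \max_k s_k + \log_2 n,
\]
after dividing by $n$ and observing that the factor $1/n$ absorbs the $\log_2 n$ slack on the right-hand end of the sandwich for all nondegenerate input precisions.

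The main obstacle will be the first step. The per-operation rules in the preceding subsections are stated informally through the black/grey/white bit picture; one must verify carefully that composing them along the $n$ multiplications and $n{-}1$ accumulations of a single dot product really produces $\max_k(\mathcal{A}_{ik}+\mathcal{B}_{kj})$ with no hidden weakening, and that the $\oplus=\max$ convention is the one appropriate for \emph{lower}-bound estimation (this is precisely why the paper explicitly augments the tropical semiring with $-\infty$ and $\max$). Once this bookkeeping is nailed down, the remainder is a purely elementary log-sum-exp manipulation, and the conversion from the tropical $(\max,+)$ estimate to the log/exp form is essentially automatic.
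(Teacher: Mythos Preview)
Your approach coincides with the paper's: reduce to a single entry (the paper sets $m=k=1$), obtain $\mathcal{C}_{ij}\ge\max_k(\mathcal{A}_{ik}+\mathcal{B}_{kj})$ from the multiplication and addition precision rules, and then compare the tropical product with the ordinary one via the elementary inequality $\max_k 2^{s_k}\ge\tfrac{1}{n}\sum_k 2^{s_k}$. Your log-sum-exp sandwich is exactly the logarithm of this mean--max inequality, so the two arguments are the same in substance.

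The only wrinkle is your final ``divide by $n$'' step. Taking logarithms of $\max_k 2^{s_k}\ge\tfrac{1}{n}\sum_k 2^{s_k}$ yields $\max_k s_k\ge\log_2\!\bigl(\sum_k 2^{s_k}\bigr)-\log_2 n$, not $\tfrac{1}{n}\log_2\!\bigl(\sum_k 2^{s_k}\bigr)$; and that additive $-\log_2 n$ version is precisely what the paper's displayed chain \eqref{eq:mm_inacc_v1} actually proves. Your ``nondegenerate input precisions'' caveat is an attempt to pass from the additive form to the multiplicative $1/n$ form, which requires the extra hypothesis $\max_k s_k\ge(\log_2 n)/(n-1)$ and is not part of the paper's argument. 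If you stop at the mean--max step, as the paper does, no such caveat is needed.
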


\begin{proof}
    We can proof this element-wise over~$C$ and~$\mathcal{C}$.
    So we can put $m=k=1$.

    The needed inequality follows from the inequality between mean and max:
    \begin{equation}
        2^{\mathfrak{c}} \geqslant
        \max(
            2^{\mathfrak{a}_1 + \mathfrak{b}_1},
            \ldots,
            2^{\mathfrak{a}_n + \mathfrak{b}_n}
        )
        \geqslant
        \frac{1}{n}
        \sum_{i=1}^{n}
        2^{
            \mathfrak{a}_1 + \mathfrak{b}_1
        },
        \label{eq:mm_inacc_v1}
    \end{equation}
    where $\mathcal{A}$ is the row~$(\mathfrak{a}_1,\ldots,\mathfrak{a}_n)$,
    and~$B$ is the column~$(\mathfrak{b}_1,\ldots,\mathfrak{b}_n)$,
    and~$\mathcal{C}$ is the $1\times 1$ matrix~$(\mathfrak{c})$.
    This inequality is obvious.
\end{proof}

Actually, this inequality (applied in~\texttt{xnumpy-1.0.0}, see~\S\ref{sec:impl}) 
gives an estimation that can be easily strengthened by the following.

\begin{theorem}
    \label{thm:ex_v2}
    Let $A$,~$B$, and~$C$ are matrices such that~$A \cdot B = C$ of shapes
    ($m \times n$, $n \times k$ and $m \times k$).
    Suppose that~$A$ (resp.~$B$,~$C$) has inaccuracies~$\mathcal{A}$~%
    (resp.~$\mathcal{B}$ and~$\mathcal{C}$).
    Then
    \[
        2^{
            \mathcal{C}
        } \geqslant
        \max\left(
            A \odot 2^\mathcal{B},
            2^\mathcal{A} \odot B
        \right),
    \]
    where power of~$2$ and~$\max$ are applied element-wise.
\end{theorem}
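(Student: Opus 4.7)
The plan is to reduce to the $1\times n$ by $n\times 1$ case exactly as in the proof of Theorem~\ref{thm:ex_v1}, since all three sides of the claimed inequality are entry-wise in the $(i,j)$-index of $C$. After this reduction, we have $c=\sum_{\ell=1}^{n} a_\ell b_\ell$ together with the per-entry inaccuracy estimates $|\Delta a_\ell|\leq 2^{\mathfrak{a}_\ell}$ and $|\Delta b_\ell|\leq 2^{\mathfrak{b}_\ell}$, and we must prove
\[
    2^{\mathfrak{c}} \;\geq\; \max\Bigl(\max_\ell |a_\ell|\cdot 2^{\mathfrak{b}_\ell},\;\max_\ell 2^{\mathfrak{a}_\ell}\cdot |b_\ell|\Bigr),
\]
reading $A\odot 2^{\mathcal{B}}$ and $2^{\mathcal{A}}\odot B$ in their natural multiplicative (max-times) form, so that the two operands live on a common scale.

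The core of the argument is the linearised error analysis from Section~\ref{subsec:err-fn}, applied term by term to the summands $a_\ell b_\ell$. For any fixed index $\ell_0$, consider a worst-case input in which every $a_\ell$ and every $b_\ell$ with $\ell\neq \ell_0$ is exact, while $b_{\ell_0}$ saturates its tolerance by an amount $2^{\mathfrak{b}_{\ell_0}}$. No cancellation between summands is then possible, and the resulting perturbation of $c$ is $|a_{\ell_0}|\cdot 2^{\mathfrak{b}_{\ell_0}}$ to first order. Since $2^{\mathfrak{c}}$ must by construction upper-bound $|\Delta c|$ for every admissible perturbation of the inputs, this already forces $2^{\mathfrak{c}}\geq |a_{\ell_0}|\cdot 2^{\mathfrak{b}_{\ell_0}}$. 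Maximising over $\ell_0$, and then repeating the same argument with the roles of $a$ and $b$ swapped, yields both inequalities inside the outer $\max$, which is exactly the claim.

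The main obstacle, as I see it, is conceptual rather than computational: one must argue that the adversary is entitled to concentrate the entire error budget on a single summand, so that no cancellation across the $n$ terms can be used to deflate the bound. This rests on interpreting $2^{\mathcal{A}}$ and $2^{\mathcal{B}}$ as \emph{admissible}, not \emph{actual}, perturbations, consistent with the worst-case reading used implicitly throughout the paper. A secondary, purely notational point is to state explicitly that $A\odot 2^{\mathcal{B}}$ is to be read as $\max_\ell\bigl(|A_{i\ell}|\cdot 2^{\mathcal{B}_{\ell j}}\bigr)$ rather than as a literal max-plus product of incommensurable scales; once these two points are settled, the theorem follows at once, and it is visibly sharper than Theorem~\ref{thm:ex_v1} because each bound $|a_\ell|\cdot 2^{\mathfrak{b}_\ell}$ dominates the second-order bound $2^{\mathfrak{a}_\ell}\cdot 2^{\mathfrak{b}_\ell}$ whenever $|a_\ell|\geq 2^{\mathfrak{a}_\ell}$, i.e.\ in the useful regime where the datum is larger than its own error tolerance.
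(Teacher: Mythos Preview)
Your proposal is correct and follows essentially the same route as the paper: reduce to $m=k=1$, then replace the crude per-term bound $2^{\mathfrak{a}_\ell+\mathfrak{b}_\ell}$ from Theorem~\ref{thm:ex_v1} by the sharper $\max\bigl(|a_\ell|\cdot 2^{\mathfrak{b}_\ell},\,|b_\ell|\cdot 2^{\mathfrak{a}_\ell}\bigr)$, and take the maximum over $\ell$. Your adversarial single-coordinate perturbation is simply a more explicit justification of that per-term bound, and your remark that $A\odot 2^{\mathcal{B}}$ must be read in max-times rather than literal max-plus form is a valid clarification the paper leaves implicit.
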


This estimate is stricter~(applied in~\texttt{xnumpy-1.0.1}, see~\S\ref{sec:impl})
and gives more exact estimation of matrix multiplication precision.

\begin{proof}
    The proof can be proceeded in almost the same manner like the proof 
    of~Theorem~\ref{thm:ex_v1} with replacement of the initial precision estimation.

    There we estimate absolute inaccuracy value for product of two numbers from below
    by the product of their absolute inaccuracies.
    We can replace the right-hand side by the maximum of one of multipliers
    multiplied with absolute value of inaccuracy of another one.
    So,~\eqref{eq:mm_inacc_v1} becomes
    \[
        2^{\mathfrak{c}} \geqslant
        \max(
            b_1 \cdot 2^{\mathfrak{a}_1}, a_1 \cdot 2^{\mathfrak{b}_1},
            \ldots,
            b_n \cdot 2^{\mathfrak{a}_n}, a_n \cdot 2^{\mathfrak{b}_n}
        )
    \]
    Remaining part of the proof is similar.
\end{proof}

\begin{remark}
    Estimation for tropical product of matrices is identical for Theorems~%
    \ref{thm:ex_v1} and~\ref{thm:ex_v2}.
    We will propose a stronger estimation in~\S\ref{sec:gmean}.
\end{remark}

This estimation can be performed with three usual matrix multiplications and additional
memory usage for~$\mathcal{A},\mathcal{B}$ and twice for~$\mathcal{C}$
(actually, they can be allocated by a single \texttt{malloc/free}-pair to
avoid excessive heap allocations).

\subsection{Gradients in neural networks}

The most widely used frameworks for neural networks (Tensorflow and Torch) are
based on automatic gradient computation and errors backpropagation.
Gradients are computed as partial derivatives for provided collection of functions
(basic collection can be extended by user-defined functions with gradients).
So, the computation dos not actually involve numeric differentiation.
At the same time, training of neural network until its convergence can lead to 
subtraction of close numbers and very small inexact values of the loss function.
As a result, gradients can become inexact, and the neural network can lose numerical stability.

Calculation of precision for gradients requires calculation of derivatives for 
arithmetic functions and standard collection of trigonometric and other functions.
So, it can be deduced to previously discussed cases.
Gradients of matrix multiplication can be deduced to matrix multiplication
with transposed matrix.
Namely, if we have matrices~$A$ and $B$, and $A\cdot B$ receives gradient~$G$,
then gradients for $A$ and $B$ are equal, respectively,
$
    G \cdot B^{T} \text{ and } A^{T} G.
$
So, if some of matrices $A$ and $B$ is inexact, then its inexactness 
contribute to inexactness in both forward and backward propagation steps.

Precision of convolution layers and their gradients are calculated analogously 
to the same for matrix multiplications.

\section{Hölder's inequality and matrix multiplication precision}
\label{sec:gmean}

\begin{theorem}
    \label{thm:ex_h}
    Let $A$,~$B$, and~$C$ are matrices such that~$A \cdot B = C$.
    Suppose that~$A$ (resp.~$B$,~$C$) has inaccuracies~$\mathcal{A}$~%
    (resp.~$\mathcal{B}$ and~$\mathcal{C}$).
    Then
    \[
        \mathcal{C} \geqslant \mathcal{A} \odot \mathcal{B}.
    \]
    In particular, the following inequality holds for any $p>1$:
    \[
        \mathcal{A} \odot \mathcal{B} \geqslant 
        \frac{1}{np} \cdot \log_2\left(2^{p\mathcal{A}} \cdot 2^{p\mathcal{B}}\right),
    \]
    where power of~$2$ and~$\log_2$ are applied element-wise.
\end{theorem}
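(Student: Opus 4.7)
The plan is to parallel the proof of Theorem~\ref{thm:ex_v1}, replacing the arithmetic-mean-vs-maximum step by the stronger $p$-power-mean-vs-maximum step. As in that proof, everything reduces element-wise to the case $m=k=1$: fix one entry of $C$, write $\mathcal{A}=(\mathfrak{a}_1,\ldots,\mathfrak{a}_n)$ and $\mathcal{B}=(\mathfrak{b}_1,\ldots,\mathfrak{b}_n)^{T}$, and set $t_i := 2^{\mathfrak{a}_i + \mathfrak{b}_i}\geq 0$. The first claim $\mathcal{C}\geq \mathcal{A}\odot \mathcal{B}$ does not involve $p$ and is proved verbatim by the argument already given in Theorem~\ref{thm:ex_v1}, so I would refer to that proof without repeating it.

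For the refined lower bound, the heart of the argument is the power-mean inequality: for every real $p\geq 1$,
\[
    \max_{1\leq i\leq n} t_i \;\geq\; \left(\frac{1}{n}\sum_{i=1}^{n} t_i^{p}\right)^{1/p},
\]
which follows either from Hölder's inequality applied between $\ell^{p}$ and $\ell^{1}$ (giving the monotonicity chain $M_1\leq M_p\leq M_{\infty}$ of power means) or, more directly, from $t_i^{p}\leq (\max_j t_j)^{p}$ by summing and taking a $p$-th root. Taking $\log_2$ and unwinding the element-wise identity $2^{p\mathcal{A}}\cdot 2^{p\mathcal{B}} = \sum_i 2^{p(\mathfrak{a}_i+\mathfrak{b}_i)} = \sum_i t_i^{p}$ recovers the announced inequality. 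The degenerate case $p=1$ reproduces Theorem~\ref{thm:ex_v1}, and for $p>1$ the bound is strictly sharper whenever the exponents $\mathfrak{a}_i+\mathfrak{b}_i$ are not all equal, with the limit $p\to\infty$ saturating at the exact tropical value $\max_i(\mathfrak{a}_i+\mathfrak{b}_i)$.

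The only real obstacle is bookkeeping rather than analysis: one must check that the normalising factor $\tfrac{1}{np}$ in the statement is consistent with the power-mean inequality after taking $\log_2$, i.e.\ that $\tfrac{1}{p}$ absorbs the $p$-th root while the remaining $\tfrac{1}{n}$ sits under the logarithm as the arithmetic-mean normalisation (exactly as in the $p=1$ case of Theorem~\ref{thm:ex_v1}). Because the power-mean inequality itself is one line, the bulk of the written proof would be this reduction and the substitution back into matrix notation; I would add a short remark after the calculation comparing the new bound to Theorem~\ref{thm:ex_v1} to make the improvement afforded by the parameter $p$ explicit, since this is the conceptual content of the theorem.
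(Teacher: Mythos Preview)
Your proposal is correct and follows essentially the same approach as the paper: the paper's proof is a one-line remark that the argument of Theorem~\ref{thm:ex_v1} goes through verbatim once the arithmetic-mean-vs-maximum inequality is replaced by the generalized (power) mean inequality of degree~$p$, which is precisely what you do. Your additional remarks on the $p\to\infty$ limit and the comparison with the $p=1$ case are not in the paper's proof but appear in the surrounding discussion, so they fit naturally.
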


The proof is actually the same with replacement of inequality between the mean value
and maximum with inequality between of generalized mean (also known as ``power mean'')
of degree~$p$.

This seems to be trivial due to addition of $p$ into power of $2$ and into 
the division outside of logarithm.
Actually, it is not trivial, because the dot inside the brackets means 
matrix multiplication, so we can not reduce~$p$ in the formula above.

Moreover, there is generalized mean inequality (also known as ``Hölder's 
inequality'')
\[
    M_p(x_1, \ldots, x_n) \leqslant M_q(x_1, \ldots, m_n),
\]
for $p < q$, where
\[
    M_p(x_1, \ldots, x_n) := \sqrt[p]{
        \frac{1}{n}
        \sum_{i=1}^{n} x_i^p
    }.
\]
and the limit property
\[
    \lim_{p\to\infty}M_p(x_1,\ldots,x_n) = \max(x_1,\ldots,x_n).
\]

It seems that taking this inequality for large~$p$ can give a better estimation
of tropical product.
But its application is limited by floating point overflows.

\begin{theorem}
    Let $A$,~$B$, and~$C$ are matrices such that~$A \cdot B = C$ of shapes
    ($m \times n$, $n \times k$ and $m \times k$).
    Suppose that~$A$ (resp.~$B$,~$C$) has inaccuracies~$\mathcal{A}$~%
    (resp.~$\mathcal{B}$ and~$\mathcal{C}$).
    Then
    \[
        2^{
            p\cdot \mathcal{C}
        } \geqslant
        \max\left(
            A \odot 2^{p \cdot \mathcal{B}},
            2^{p \cdot \mathcal{A}} \odot B
        \right),
    \]
    for any $p>1$,
    where power of~$2$ and~$\max$ are applied element-wise.
\end{theorem}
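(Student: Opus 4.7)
The plan is to mimic the proof of Theorem~\ref{thm:ex_v2} verbatim, substituting the generalized (power) mean inequality for the elementary mean--max inequality at the one step where the latter was used, exactly as the paragraph preceding the theorem suggests.

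First I would reduce to the scalar case $m=k=1$ as in Theorems~\ref{thm:ex_v1} and~\ref{thm:ex_v2}: the assertion is entry-wise in $C$ and $\mathcal{C}$, so one takes $A=(a_1,\ldots,a_n)$ a row, $B=(b_1,\ldots,b_n)^{T}$ a column, $c=\sum_\ell a_\ell b_\ell$, and scalar log-inaccuracies $\mathfrak{a}_\ell,\mathfrak{b}_\ell,\mathfrak{c}$. Then I would reproduce, as in Theorem~\ref{thm:ex_v2}, the per-index lower bound on the absolute inaccuracy of $c$: because each cross term $a_\ell\,\Delta b_\ell$ and $b_\ell\,\Delta a_\ell$ from the expansion of $(a_\ell+\Delta a_\ell)(b_\ell+\Delta b_\ell)$ contributes to $\Delta c$, one has, for each $\ell$,
\[
    2^{\mathfrak{c}}\;\geqslant\;|b_\ell|\cdot 2^{\mathfrak{a}_\ell}
    \qquad\text{and}\qquad
    2^{\mathfrak{c}}\;\geqslant\;|a_\ell|\cdot 2^{\mathfrak{b}_\ell}.
\]

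Next I would invoke the power-mean inequality $M_{\infty}(x)\geqslant M_{p}(x)$ for $p\geqslant 1$, i.e.\ monotonicity of $L^{p}$-norms, which is exactly the step replacing the $M_{\infty}\geqslant M_{1}$ used in the proof of Theorem~\ref{thm:ex_v2}. Combining the per-$\ell$ scalar inequalities above with this replacement (raising both sides to the $p$-th power and then taking the maximum over $\ell$ across both families) produces the $p$-scaled version of the max-over-$\ell$ bound on $2^{p\mathfrak{c}}$. Reassembling over the outer row and column indices then reads off precisely the tropical products $A\odot 2^{p\mathcal{B}}$ and $2^{p\mathcal{A}}\odot B$ sitting on the right-hand side of the claimed matrix inequality.

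The step I expect to be the main obstacle is not analytic but notational and dimensional: the paper uses $\odot$ for a tropical product whose arguments are sometimes logarithmic inaccuracies and sometimes raw numeric matrices, so one has to pin down which convention is in force in each of $A\odot 2^{p\mathcal{B}}$ and $2^{p\mathcal{A}}\odot B$ and check that the $p$-th-power substitution inside the max genuinely reproduces the stated right-hand side rather than a version in which $A$ and $B$ themselves get raised to $p$. Once this bookkeeping is settled, handling the signs of the entries of $A$ and $B$ by passing to absolute values, as in Theorem~\ref{thm:ex_v2}, is the only remaining care; the rest of the argument is a mechanical line-by-line rewrite of the proof of Theorem~\ref{thm:ex_v2} with a uniform factor of $p$ inserted into every exponent.
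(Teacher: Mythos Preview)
Your proposal is correct and matches the paper's own proof, which is the single sentence ``This Theorem is simply a combination of Theorems~\ref{thm:ex_v2} and~\ref{thm:ex_h}'': redoing the argument of Theorem~\ref{thm:ex_v2} with the power-mean substitution that defines Theorem~\ref{thm:ex_h} is precisely that combination. Your explicit flagging of the notational ambiguity in $A\odot 2^{p\mathcal{B}}$ is warranted and goes beyond what the paper itself addresses.
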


\begin{proof}
    This Theorem is simply a combination of Theorems~\ref{thm:ex_v2}
    and~\ref{thm:ex_h}.
\end{proof}

\section{Implementation}
\label{sec:impl}

We provide an implementation of our approach~(see~\cite{XNumPyGitHub,XNumPyGitFlic}) by extending
widely applied \texttt{NumPy} library (see~\cite{harris2020array}).
There ``black bits'' are estimated, i.\,e.~if bits or digits are shown as inexact
by this library, then they are meaningless.

One exception is precision computation for matrix product.
If we compute matrix product, we rely on its numerical values
provided by libraries like \texttt{blas/cublas/mkl}.
At the same time, precision computation itself requires matrix multiplication.
Due to performance reasons, these matrix multiplications are also computed
by the same libraries, although in some cases it can give inexact matrix elements,
and this can imply sometimes inexact precision estimations.
But if we rely on some bit inexact numerical values, then we can rely on
some bit inexact precision estimation in the same numerical computation.

Actually, some minor inaccuracies for the condition number computation can
also happen, but they are usually can be omitted.

\section{Future work}
\label{sec:future}

All these estimations hold independent upon platform, software and hardware details.
The implementation is already available for CPU and will be extended for GPU.
Also, more advanced algorithms such as linear algebra and statistic methods
will be overloaded to support precision loss tracking automagically.

\section{Acknowledgments}
\label{sec:ackn}

The author is grateful to his Kryptonite colleagues Vasily Dolmatov, 
Dr. Nikita Gabdullin and Dr. Anton Raskovalov for fruitful discussions of topic and results
and for assistance in testing the \texttt{xnumpy} library.

\bibliographystyle{unsrt}
\bibliography{refs}

\end{document}